\definecolor{citeclr}{rgb}{0.55, 0.55, 0.64}
\definecolor{linkclr}{rgb}{0, 0.21, 0.9447}
\patchcmd{\subsection}{-.5em}{.5em}{}{}
\patchcmd{\section}{\normalfont}{\normalfont\Large}{}{}
\newtheorem{theorem}{Theorem}[section]
\newtheorem{lemma}[theorem]{Lemma}
\newtheorem{statistic}[theorem]{Statistic}
\newtheorem{fact}[theorem]{Fact}
\theoremstyle{definition}
\newtheorem{remark}[theorem]{Remark}
\crefname{figurecap}{figure}{figures}
\newtheorem{tablecap}[theorem]{Table}
\crefname{tablecap}{table}{tables}
\crefname{idea}{idea}{ideas}
\crefname{observation}{observation}{observations}
\newcommand{\E}{\mathbb{E}}
\newcommand{\PP}{\mathbb{P}}
\newcommand{\R}{\mathbb{R}}
\newcommand{\Z}{\mathbb{Z}}
\newcommand{\cT}{\mathcal{T}}
\newcommand{\fp}{\mathfrak{p}}
\newcommand{\fq}{\mathfrak{q}}
\DeclareSymbolFont{cyrletters}{OT2}{wncyr}{m}{n}
\DeclareMathSymbol{\sha}{\mathalpha}{cyrletters}{"58}
\newcommand{\eps}{\varepsilon}
\newcommand{\ds}{\displaystyle}
\newlength{\strutheight}
\newcommand{\half}{\frac{1}{2}}
\newcommand{\thalf}{\tfrac{1}{2}}
\renewcommand{\mod}{\mspace{4mu}\mathrm{mod}\mspace{4mu}}
\newcommand{\q}{q}
\newcommand{\X}{X}
\newcommand{\zqz}{(\Z/\q\Z)^\times}
\newcommand{\unif}{\mathrm{Unif}\zqz}
\newcommand{\dkl}{D_{\mathrm{KL}}}
\author{Alex Cowan}
\address{Department of Mathematics, University of Waterloo, Waterloo, ON, Canada}
\email{alex.cowan@uwaterloo.ca}
\title{The relative entropy of primes in arithmetic progressions is really small}
\date{\today}
   \def\MR#1{}
\begin{document}
\begin{abstract}
Fix a modulus $\q$. One would expect the number of primes in each invertible residue class mod $\q$  to be multinomially distributed, i.e.\ for each $p \mod q$ to behave like an independent random variable uniform on $\zqz$. Using techniques from data science, we discover overwhelming evidence to the contrary: primes are much more uniformly distributed than iid uniform random variables. This phenomenon was previously unknown, and there is no clear theoretical explanation for it.

To demonstrate that our test statistic of choice, the KL divergence, is indeed extreme, we prove new bounds for the left tail of the relative entropy of the uniform multinomial using the method of types.
\end{abstract}
\maketitle
\tableofcontents

\section{Models, statistics, and percentiles}

In a nebulous sense, it's a good rule of thumb that when prime numbers are ordered by their absolute values, their residues modulo some fixed $\q$ behave like independent uniform random variables on $\zqz$.
Let
\begin{align}
  \label{eq:xidef}
  \xi_{p,\q} \sim \unif
\end{align}
be iid random variables indexed by primes $p$.
The sequence
\begin{align}
  \label{eq:random_seq}
  \xi_{2,\q},\, \xi_{3,\q},\, \xi_{5,\q},\, \dots
\end{align}
is a \textit{model} for
\begin{align}
  \label{eq:arithmetic_seq}
  2\mod\q,\, 3\mod\q,\, 5\mod\q,\, \dots
\end{align}
How might we assess it?

The aforementioned rule of thumb can be recast as saying that the sequences \eqref{eq:arithmetic_seq} and \eqref{eq:random_seq}
produce similar statistics\footnote{The word ``statistics'' has two meanings: a discipline or body of methodologies, and the plural of ``statistic'': a quantity derived from data. Here our meaning is the latter.}.
We make sense of arithmetic statistics such as
\begin{align}
  \label{eq:8data}
  \begin{aligned}
  \#\{p < 10^8 \,:\, p = 1\mod 8\} &= 1,\mspace{-2.5mu}439,\mspace{-2.5mu}970
  \\
  \#\{p < 10^8 \,:\, p = 3\mod 8\} &= 1,\mspace{-2.5mu}440,\mspace{-2.5mu}544
  \\
  \#\{p < 10^8 \,:\, p = 5\mod 8\} &= 1,\mspace{-2.5mu}440,\mspace{-2.5mu}534
  \\
  \#\{p < 10^8 \,:\, p = 7\mod 8\} &= 1,\mspace{-2.5mu}440,\mspace{-2.5mu}406
  \end{aligned}
\end{align}
by determining to what degree they resemble the statistics typical realizations of the random model \eqref{eq:random_seq} would produce,
discovering for instance that the counts of \eqref{eq:8data} are more uniform than about $98.5\%$ of random samples.

To measure the extent to which arithmetic data behaves randomly we introduce the following framework.
Let $f$ be a function on sequences of the form \eqref{eq:arithmetic_seq} and \eqref{eq:random_seq}, with codomain $\R$ for simplicity.
When $f$ is evaluated on the arithmetic data \eqref{eq:arithmetic_seq}, the result is a real number $\tau$, an \textit{empirical statistic}. 
When $f$ is evaluated on \eqref{eq:random_seq}, the result is instead a random variable $T$, a \textit{random statistic}.
The extent to which arithmetic data such as \eqref{eq:arithmetic_seq} behaves randomly can be assessed by estimating the \textit{percentile}
\begin{align}
  \label{eq:percentile_def}
  \PP(T < \tau)
  =
  \PP\big( f(\xi_{2,\q}, \xi_{3,\q}, \xi_{5,\q},\dots) < f(2\mod\q, 3\mod\q, 5\mod\q, \dots) \big)
  ,
\end{align}
where $\PP$ is short for the probability of.
For the arithmetic data \eqref{eq:arithmetic_seq} to masquerade as random, the percentile \eqref{eq:percentile_def} shouldn't be too close to $0\%$ nor $100\%$. We now have a tool for discovering non-random behaviour.

\section{Example: Chebyshev's bias}
\label{sec:chebyshev}

A well-known instance in which
primes mod $\q$ do not behave randomly is ``Chebyshev's bias'' \cite{rubinstein_sarnak}.
Let's look at what is,
on the arithmetic side ``prime number races'',
and
on the statistical side ``stopping times of random walks''.
As usual, let
\begin{align*}
  \pi(x) &\coloneqq \#\{p \leqslant x\}
  \\
  \pi(x; a\mod\q) &\coloneqq \#\{p \leqslant x \,:\, p = a\mod\q\}
  .
\end{align*}

\begin{statistic}
  \label{statistic:chebyshev}
  On the arithmetic side, define
  \begin{align*}
    \tau_2 &\coloneqq \min \{x \,:\, \pi(x;\,1 \mod 4) > \pi(x;\,3 \mod 4)\}
    \\
    \tau_3 &\coloneqq \min \{x \,:\, \pi(x;\,1 \mod 3) > \pi(x;\,2 \mod 3)\}
    ,
  \end{align*}
  to be compared on the statistical side with
  \begin{align*}
    T_2 &\coloneqq \min \!\left\{x \,:\, \sum_{\substack{p \leqslant x \\ p \neq 2}} \begin{cases}\phantom{-}1 & \text{\emph{if $\xi_{p,4} = 1$}} \\ -1 & \text{\emph{if $\xi_{p,4} = 3$}}\end{cases} \,> 0\right\}
    \\
    T_3 &\coloneqq \min \!\left\{x \,:\, \sum_{\substack{p \leqslant x \\ p \neq 3}} \begin{cases}\phantom{-}1 & \text{\emph{if $\xi_{p,3} = 1$}} \\ -1 & \text{\emph{if $\xi_{p,3} = 2$}}\end{cases} \,> 0\right\}
  \end{align*}
  respectively.
\end{statistic}
Our task is to estimate $\PP(T_2 < \tau_2)$ and $\PP(T_3 < \tau_3)$.

Let $\X_k \sim 2\mathrm{Ber}(\thalf) - 1$, $k = 1, 2, \dots$ be iid random variables that are $\pm 1$ each with probability $1/2$, and let $S_t$ denote the simple random walk $\X_1 + \ldots + \X_t$. The percentile $\PP(T_2 < \tau_2)$ can be written 
\begin{align}
  \label{eq:rw_rewrite}
  \PP(T_2 < \tau_2) = 1 - \sum_{n=\pi(\tau_2) - 1}^\infty \PP\big(\min\{t \,:\, S_t = 1\} = n\big)
  ,
\end{align}
and similarly for $\PP(T_3 < \tau_3)$.

For any $z \in \Z_{>0}$,
the distribution of the smallest value of $t$ for which $S_t = z$ is given by
\begin{align*}
  \PP\big(\min\{t \,:\, S_t = z\} = n\big) = \frac{z}{n2^n}{n \choose \frac{z+n}{2}} \quad\text{if $z+n$ is even}
\end{align*}
and $0$ if $z+n$ is odd.
%
Using the effective form of Stirling's formula 
\begin{align*}
  \sqrt{2\pi n}\mspace{2mu}\frac{n^n}{e^n}e^{\frac{1}{12n}} e^{-\frac{1}{360n^3}} < n! < \sqrt{2\pi n}\mspace{2mu}\frac{n^n}{e^n}e^{\frac{1}{12n}}
  ,
\end{align*}
valid for all $n \geqslant 2$,
one finds that, for $N \geqslant 2$,
\begin{align}
  \label{eq:rw_approx}
  \PP\big(\min\{t \,:\, S_t = 1\} \geqslant 2N + 1\big)
  &= (1 + r) \sum_{n=N}^\infty \frac{1}{\sqrt{2\pi n(n+1)(2n+1)}}
\end{align}
with
\begin{align}
  \label{eq:rw_bound1}
  e^{-\frac{1}{360(2N+1)^3} - \frac{5}{24}\frac{2N+1}{N(N+1)}}
  < 1 + r <
  e^{\frac{1}{360}\left(\frac{1}{N^3} + \frac{1}{(N+1)^3}\right) + \frac{1}{24}\left(\frac{1}{N^2} - \frac{1}{(N+1)^2} - \frac{2N+1}{N(N+1)}\right)}
  .
\end{align}
The series in \eqref{eq:rw_approx} can be bounded as
\begin{align}
    \label{eq:rw_bound2}
  \frac{1}{\sqrt{16\pi(N+1)}}
  < \sum_{n=N}^\infty \frac{1}{\sqrt{2\pi n(n+1)(2n+1)}} <
  \frac{1}{\sqrt{16\pi(N-1)}}
  .
\end{align}



As Rubinstein and Sarnak \cite{rubinstein_sarnak} report,
\begin{align*}
  &\tau_2 = \min \{x \,:\, \pi(x;\,1 \mod 4) > \pi(x;\,3 \mod 4)\} = 26,\mspace{-2.5mu}861\\
  &\tau_3 = \min \{x \,:\, \pi(x;\,1 \mod 3) > \pi(x;\,2 \mod 3)\} = 608,\mspace{-2.5mu}981,\mspace{-2.5mu}813,\mspace{-2.5mu}029
  ,
\end{align*}
corresponding to $N = 1,\mspace{-2.5mu}472$ and $N = 11,\mspace{-2.5mu}669,\mspace{-2.5mu}295,\mspace{-2.5mu}395$ respectively. 

Combining \eqref{eq:rw_rewrite} with the bounds \eqref{eq:rw_bound1} and \eqref{eq:rw_bound2},
\begin{fact}
  \label{fact:chebyshev}
\begin{align}
  \nonumber
  \begin{aligned}
    (100 - 0.3674)\%  
    \,>\,{} &\PP(T_2 < \tau_2) \,>\,
    (100 - 0.3678)\% 
    \\
    (100 - 0.00013056980498)\% 
    \,>\,{} &\PP(T_3 < \tau_3) \,>\,
    (100 - 0.00013056980500)\% 
  .
  \end{aligned}
\end{align}
\end{fact}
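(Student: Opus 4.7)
My plan is to carry out the numerical evaluation already telegraphed in Section~\ref{sec:chebyshev}. Starting from \eqref{eq:rw_rewrite}, the tail sum on the right equals $\PP(\min\{t \,:\, S_t = 1\} \geqslant \pi(\tau_i) - 1)$; setting $2N+1 = \pi(\tau_i) - 1$ matches the form of \eqref{eq:rw_approx}, and the values of $\tau_2$ and $\tau_3$ reported by Rubinstein and Sarnak give $N = 1{,}472$ and $N = 11{,}669{,}295{,}395$ respectively, as already noted. Applying \eqref{eq:rw_approx} rewrites the tail as $(1+r)$ times an explicit infinite series; sandwiching $1+r$ between the bounds in \eqref{eq:rw_bound1} and the remaining series between the bounds in \eqref{eq:rw_bound2}, and then multiplying lower bounds together and upper bounds together, yields two-sided bounds on $\PP(\min\{t \,:\, S_t = 1\} \geqslant 2N+1)$. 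Subtracting each from $1$ produces the intervals for $\PP(T_i < \tau_i)$ asserted in the fact. (Signs cause no trouble: both exponents in \eqref{eq:rw_bound1} are negative for the given $N$, so both factors of $1+r$ are in $(0,1)$, and the series in \eqref{eq:rw_bound2} is positive.)

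Since no further conceptual input is needed, the main obstacle is just maintaining enough precision in the numerical evaluation, and this is only serious for $\tau_3$. There the probability itself is roughly $1/\sqrt{16\pi N} \approx 1.3 \times 10^{-6}$, whereas the interval to be established has width about $2 \times 10^{-16}$, so ten or so correct digits are required. Every exponent entering \eqref{eq:rw_bound1} is at most $O(1/N)$ in absolute value, so to avoid catastrophic cancellation one can replace each $\exp(x)$ by a few terms of its Taylor series $1 + x + x^2/2 + \cdots$ and then work in exact rational arithmetic or a sufficiently wide floating-point format; the leading factor $1/\sqrt{16\pi(N \pm 1)}$ in \eqref{eq:rw_bound2} poses no similar difficulty. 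For $\tau_2$ the stated interval is three orders of magnitude wider than the corresponding truncation errors, so ordinary double-precision evaluation is already more than adequate.
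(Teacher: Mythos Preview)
Your proposal is correct and follows exactly the paper's own argument: the paper's entire justification is the one-line ``Combining \eqref{eq:rw_rewrite} with the bounds \eqref{eq:rw_bound1} and \eqref{eq:rw_bound2},'' together with the values $N = 1{,}472$ and $N = 11{,}669{,}295{,}395$, and you have simply spelled out how those ingredients are combined and added sensible remarks on the precision needed for the $\tau_3$ case.
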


\Cref{fact:chebyshev}
quantifies the qualitative observation that the first time at which primes $3 \mod 4$ outnumber primes $1 \mod 4$ occurs later than would be expected of unbiased random walks (it occurs later than about $99.6\%$ of random walks), and much more so for primes $2 \mod 3$ outnumbering primes $1 \mod 3$ ($99.9999\%$).
By quantifying the extent to which the values of \cref{statistic:chebyshev} are aberrant,
we obtain a concrete assessment of the extent to which primes mod $\q$ behave similarly or dissimilarly to the random model \eqref{eq:random_seq}.

\section{Relative entropy}

Our statistic of choice for discovering this paper's main result is the ``relative entropy'', also known as the Kullback--Leibler divergence or KL divergence, which we now define and motivate.

Let $\X_\alpha$ and $\X_\beta$ be discrete random variables with probability mass functions (PMFs) $\fp_\alpha$ and $\fp_\beta$ respectively. Define the random variable
\begin{align}
  \nonumber
  \log \PP(\hat{\X}_\alpha \mid \beta) \coloneqq \log \PP(\X_\beta = \X_\alpha) = \log \fp_\beta(\X_\alpha),
\end{align}
considered as a function of the random variable $\X_\alpha$.
If one imagines that $\hat{\X}_\alpha$ is a realization of the random variable $\X_\alpha$, then $\log \PP(\hat{\X}_\alpha \mid \beta)$ is the $\log$ of the probability that $\hat{\X}_\alpha$ would have been generated by $\X_\beta$.
The expectation of $\log \PP(\hat{\X}_\alpha \mid \beta)$ is 
\begin{align}
  \label{eq:loglikelihood_expectation}
  \E\big[\log \PP(\hat{\X}_\alpha \mid \beta)\big] &= \sum_a \fp_{\alpha}(a) \log \fp_{\beta}(a)
  \\
  \nonumber
  &\eqqcolon -H(\fp_{\alpha}, \fp_{\beta}).
\end{align}
The cross-entropy $H(\fp,\fq)$ is related to the 
\textit{relative entropy} $\dkl(\fp\parallel\fq)$ 
via \cite[\S 2.3]{cover_thomas}
\begin{align}
  \label{eq:KL_divergence_def}
  \dkl(\fp \parallel \fq)
  \coloneqq H(\fp,\fq) - H(\fp)
  =
  \sum_a \fp_{\alpha}(a) \log \frac{\fp_{\alpha}(a)}{\fp_\beta(a)}
  ,
\end{align}
where 
$H(\fp) \coloneqq H(\fp,\fp)$ denotes entropy.

It follows from e.g.\ Jensen's inequality that $\dkl(\fp \parallel \fq) \geqslant 0$ with equality iff $\fp = \fq$. Hence, the normalization \eqref{eq:KL_divergence_def} is the affine transformation of the expected log-likelihood \eqref{eq:loglikelihood_expectation} for which the transformed quantity is non-negative and actually $0$ sometimes.

The relevant case for us will be the one in which $\X_\beta = \unif$
and $\X_\alpha = M_x$ the empirical distribution of the tuple
\begin{align*}
  \left(
  \frac{\#\{p \leqslant x \,:\, \xi_{p,\q} = a_1\}}{\pi(x)}
  ,
  \frac{\#\{p \leqslant x \,:\, \xi_{p,\q} = a_2\}}{\pi(x)}
  ,
  \dots
  ,
  \frac{\#\{p \leqslant x \,:\, \xi_{p,\q} = a_{\varphi(\q)}\}}{\pi(x)}
  \right)\!
  ,
\end{align*}
with $a_1, a_2,\dots,a_{\varphi(\q)}$ ranging over $a \in \zqz$.
As $\xi_{p,\q} \sim \unif$ 
each independent, $M_x$ is multinomially distributed with $\pi(x)$ trials and all probabilities equal to $1/\varphi(\q)$ --- $\varphi(\q) = \#\zqz$ is Euler's totient function. For $a \in \zqz$, let $M_x(a)$ denote the $a^{\text{th}}$ component of this multinomial, i.e.
\begin{align*}
  M_x(a) = \frac{1}{\pi(x)}\sum_{p \leqslant x} \begin{cases} 1 & \text{if $\xi_{p,\q} = a$} \\ 0 & \text{otherwise.} \end{cases}
\end{align*}

\begin{statistic}
  \label{statistic:KL}
  On the arithmetic side, define
  \begin{align*}
    \tau_\q \coloneqq \dkl\!\left( \pi(x;\,\cdot\mod\q) \parallel \unif \right)
    &= \sum_{a \in \zqz} \frac{\pi(x; a\mod\q)}{\pi(x)} \log
    \!\left(
    \frac{\pi(x; a\mod\q)}{\pi(x)/\varphi(\q)}
    \right)
    ,
  \shortintertext{
    to be compared on the statistical side with
  }
    T_\q \coloneqq \dkl\!\left(M_x \parallel \unif\right)
    &= 
    \sum_{a \in \zqz} M_x(a) \log\!\big(\varphi(\q)M_x(a)\big)
    .
  \end{align*}
\end{statistic}
Our task is now to estimate the percentile $\PP(T_\q < \tau_\q)$. We need information about 
\begin{enumerate}[label=(\roman*)]
\item
  The distributions of the random statistics $T_\q$, and
\item
  The values of the empirical statistics $\tau_\q$.
\end{enumerate}

\section{The statistical aspect}

To set our expectations, the following remark classifies all possible limiting behaviours of relative entropies of multinomials.

\begin{remark}
  \label{rem:wilks}
  
  Let $\fp$ and $\fq$ be PMFs on some common finite set of size $k$,
  and let $M_n$ be multinomially distributed with $n$ trials and probabilities $\fp$.
  The strong law of large numbers implies that, as $n \to \infty$,
  \begin{align}
    \label{eq:lln_KL}
    \dkl(M_n \parallel \fq) \stackrel{\mathrm{a.s.}}{\longrightarrow} \dkl(\fp \parallel \fq)
    .
  \end{align}

  When $\fp = \fq$, Wilks' theorem \cite{wilks} states that, as $n \to \infty$,
  \begin{align}
    \label{eq:wilks}
    2n \dkl(M_n \parallel \fq) \longrightarrow \chi^2_{k-1},
  \end{align}
  a $\chi^2$-distributed random variable with $k-1$ degrees of freedom. 
  Above
  $\longrightarrow$ denotes convergence in distribution.

  Summarizing, the sum of relative entropies of $n$ iid random variables with PMF $\fp$, taken against $\fq$, is 
  $\gg n$ with probability $1$ if $\fp \neq \fq$, and otherwise, when rescaled by a factor of $2n$, converges in distribution to a $\chi^2$ random variable.
\end{remark}

To test our hypothesis that the arithmetic data \eqref{eq:arithmetic_seq} bears statistical semblance to the random model \eqref{eq:random_seq}, we evaluate for many moduli $\q$ the percentiles $\PP(T_\q < \tau_\q)$, where $T_\q$ and $\tau_\q$ are the relative entropies given in \cref{statistic:KL}. \Cref{rem:wilks} tells us that $2\pi(x)\cdot T_\q$ converges in distribution to a $\chi^2$ random variable with $\varphi(q)-1$ degrees of freedom,
and that no other iid choice of the random variables \eqref{eq:xidef} will do better.

The forthcoming
\cref{table:KL}
uncovers behaviour which qualitatively appears outlandish under our hypothesis. Comparing to the limiting distribution \eqref{eq:wilks} is known as a $G$-test, and is similar to Pearson's well-known $\chi^2$ goodness-of-fit test. But in order to establish rigorous bounds on the percentiles $\PP(T_\q < \tau_\q)$ an effective form of Wilks' theorem \eqref{eq:wilks} is needed,
specifically for the left tail.
This is quite an unusual situation to find oneself in in light of \cref{rem:wilks}: iid random variables can under no circumstances produce relative entropies much smaller than expected. Existing effective bounds in the literature, e.g.\ \cite{agrawal2020, agrawal2022, mjtnw, jvhw}, focus on the right tail, and aren't helpful here. 
To rectify the situation, we prove
the following technical theorem, which is practical for computation and strong enough to draw conclusions.

\begin{theorem}
  \label{thm:left_tail}
  Let
  \begin{itemize}
  \item
    $[y]$ denote the nearest integer to $y$ (with ties broken arbitrarily)
  \item
    $S$ be a finite set of size $k \geqslant 2$
  \item
    $M_n$ be a multinomially distributed random variable with $n \geqslant 2$ trials and all probabilities equal to $1/k$
  \item
    $\mu \coloneqq n/k$, 
  \item
    $\theta \in \R_{>0}$
  \item
    $B \coloneqq \lfloor n\sqrt{2\theta} + k|\mu - [\mu]| \rfloor$
  \item
    for any $\Delta \in \Z$, $0 < \Delta < [\mu]$,
    $$c_\Delta \coloneqq \frac{1}{2} + \frac{[\mu]}{\Delta} + \frac{[\mu]^2}{\Delta^2} \log\!\left(1 - \frac{\Delta}{[\mu]} \right)\!.$$ 
  \end{itemize}
  For all $\theta$ such that
  $0 < B < [\mu]$
  and
  ${\ds 2c_{B} > -\frac{2[\mu] - 1}{2[\mu] + 1}}$,
  \begin{align*}
    \PP\Big(\dkl
    &\big(M_n \parallel \mathrm{Unif}(S)\big) \leqslant \theta\Big)
    \\
    \,<\,{}&
    \frac{\sqrt{2\pi n} \,\mu^n}{(2\pi[\mu])^{\frac{k}{2}} [\mu]^n} e^{\frac{1}{12n} + k([\mu] - \mu)(1 + \frac{1}{2[\mu]})}
    \sum_{\Delta = 1}^B
    \exp\!\Bigg[
    \frac{\half - c_\Delta}{[\mu]^2}\Delta^3
    - \left(\frac{\thalf + c_\Delta}{[\mu]} - \frac{\thalf - c_\Delta}{2[\mu]^2}\right)\!\Delta
    \Bigg]
    \sum_{r=1}^{k} \binom{k}{r} \binom{\Delta - 1}{r - 1} 2^r
    \\
    &+ \frac{1}{k^n}
    \begin{cases}
      {\ds \frac{n!}{(\mu!)^k}} & \text{if $\mu \in \Z$}
      \\
      0 & \text{\emph{otherwise.}}
    \end{cases}
  \end{align*}
\end{theorem}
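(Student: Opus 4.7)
The plan is to combine Pinsker's inequality with the method of types, using the effective Stirling estimates from the preceding section to bound the probability of each type.

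First, I would apply Pinsker's inequality, $\|P-Q\|_1 \leq \sqrt{2\dkl(P\parallel Q)}$, to $P = M_n/n$ and $Q = \mathrm{Unif}(S)$: on the event $\dkl(M_n\parallel \mathrm{Unif}(S))\leq \theta$, this yields $\sum_a |n_a - \mu| \leq n\sqrt{2\theta}$. Setting $m_a := n_a - [\mu]$ and applying the triangle inequality coordinatewise gives $\sum_a |m_a|\leq n\sqrt{2\theta}+k|\mu-[\mu]|$, and since the left side is a nonnegative integer it is at most $B$. Thus the event $\{\dkl\leq\theta\}$ is contained in $\{\sum_a|m_a|\leq B\}$. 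Because $M_n$ is multinomial with equal probabilities $1/k$, the probability it equals a specific composition is $\binom{n}{n_1,\dots,n_k}k^{-n}$, so a union bound gives
$$\PP(\dkl\leq\theta) \;\leq\; \sum_{\substack{m\in\Z^k,\,\sum m_a=n-k[\mu]\\ \sum|m_a|\leq B}}\binom{n}{[\mu]+m_1,\dots,[\mu]+m_k}k^{-n}.$$
The $m=0$ summand appears only when $\mu\in\Z$ and contributes precisely the piecewise second term of the statement, so I would set it aside and handle $1\leq\sum|m_a|\leq B$ in what follows.

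Second, I would apply the effective Stirling estimates already quoted: $n!\leq\sqrt{2\pi n}(n/e)^n e^{1/12n}$ for the numerator and $n_a!\geq\sqrt{2\pi n_a}(n_a/e)^{n_a}$ for each denominator factor. Using $n^n k^{-n}=\mu^n$, this produces
$$\binom{n}{n_1,\dots,n_k}k^{-n} \;\leq\; \frac{\sqrt{n}\,e^{1/12n}}{(2\pi)^{(k-1)/2}\sqrt{\prod_a n_a}}\cdot\frac{\mu^n}{\prod_a n_a^{n_a}}.$$
Substituting $n_a = [\mu]+m_a$ and pulling out $[\mu]^n = [\mu]^{\sum_a n_a}$, the ratio $\mu^n/\prod_a n_a^{n_a}$ becomes $(\mu^n/[\mu]^n)\prod_a(1+m_a/[\mu])^{-[\mu]-m_a}$, while $\prod_a\sqrt{n_a}$ is compared against $[\mu]^{k/2}$, with the slack absorbed into the explicit correction $e^{k([\mu]-\mu)(1+1/(2[\mu]))}$ after using $|m_a|\leq[\mu]$.

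Third, and this is the crux, I would produce a uniform upper bound on $\exp(-\sum_a([\mu]+m_a)\log(1+m_a/[\mu]))$ valid for every $m\in\Z^k$ with $\sum_a|m_a|=\Delta$. With $g(x):=(1+x)\log(1+x)$ the factor equals $\exp(-[\mu]\sum_a g(m_a/[\mu]))$, and $g$ admits the Taylor expansion $g(x) = x + x^2/2 - x^3/6 + \cdots$. By convexity of $g$ together with the asymmetry near $x=-1$, the worst case for a fixed budget $\sum|m_a|=\Delta$ is concentration of the deviation in a single negative coordinate $m_a = -\Delta$; this is how $\log(1-\Delta/[\mu])$ enters the bound, and packaging the Taylor remainder at the endpoint $-\Delta/[\mu]$ into the constant $c_\Delta$ is what produces the specific exponent appearing in the theorem. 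The hypothesis $2c_B > -(2[\mu]-1)/(2[\mu]+1)$ is exactly the condition under which a certain coefficient in this inequality retains the correct sign, so that the per-type bound is monotone across $1\leq\Delta\leq B$.

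Finally, I would count the types: the number of $m\in\Z^k$ with $\sum|m_a|=\Delta$ is $\sum_{r=1}^k\binom{k}{r}\binom{\Delta-1}{r-1}2^r$, obtained by choosing the $r$ nonzero coordinates ($\binom{k}{r}$), composing $\Delta$ into $r$ positive parts by stars and bars ($\binom{\Delta-1}{r-1}$), and assigning signs ($2^r$). Dropping the constraint $\sum m_a = n-k[\mu]$ only enlarges this count, which is acceptable for an upper bound. Multiplying the per-type bound from the third step by this count and summing $\Delta = 1,\dots,B$ yields the first term of the theorem. The main obstacle is the third step: producing the uniform worst-case bound in closed form with non-asymptotic constants sharp enough for the numerical applications, and verifying that the closed form $c_\Delta$ genuinely captures the Taylor remainder over the full range of $\Delta$ allowed by the hypotheses.
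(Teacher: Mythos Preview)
Your skeleton matches the paper exactly: Pinsker to pass to the $L^1$ ball, the triangle-inequality shift from $\mu$ to $[\mu]$, Stirling on the multinomial weights, and the same stars-and-bars count for $\#\cT_\Delta$. The gap is in your third step, where the mechanism you describe is not the one that produces the stated exponent.

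You propose an extremal argument: ``the worst case for a fixed budget $\sum|m_a|=\Delta$ is concentration of the deviation in a single negative coordinate.'' The paper does \emph{not} argue this way, and that claim would be hard to justify and would not directly give the $\Delta^3$ and $\Delta$ terms with the specific coefficients. What the paper actually does is keep the full exponent $-\sum_j (d_j + [\mu] + \tfrac12)\log(d_j+[\mu])$ intact (the $+\tfrac12$ is the $\sqrt{n_a}$ factor, not handled separately as you suggest), split off $\log[\mu]$, and then apply the \emph{pointwise} quadratic lower bound
\[
\log(1+x)\ \geq\ x - \bigl(\tfrac12 - c_\Delta\bigr)x^2 \qquad \text{for all } |x|\leq \Delta/[\mu],
\]
with $c_\Delta$ chosen precisely so that equality holds at $x=-\Delta/[\mu]$; this is the true origin of $\log(1-\Delta/[\mu])$ in $c_\Delta$. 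Applying this bound to every $x=d_j/[\mu]$ turns the exponent into a linear combination of $\sum d_j$, $\sum d_j^2$, and $\sum d_j^3$. The final form then comes from the crude integer inequalities $\sum d_j^2\geq\sum|d_j|=\Delta$ and $\sum d_j^3\leq\Delta^3$, and the hypothesis $2c_B>-(2[\mu]-1)/(2[\mu]+1)$ is exactly what makes the coefficient of $\sum d_j^2$ negative so that the first of these can be invoked. Replace your ``single negative coordinate'' heuristic with this quadratic-minorant-plus-power-sum argument and the proof goes through as written.
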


The proof of \cref{thm:left_tail} is relegated to \cref{sec:proof}.

\section{The arithmetic aspect}

We're now ready to assess the random model \eqref{eq:random_seq} by measuring the relative entropy as given in \cref{statistic:KL}. The empirical statistics $\tau_\q$ are straightforward to compute when $x = 10^8$, which is easily large enough for \cref{thm:left_tail}'s bound to reveal that the percentiles $\PP(T_\q < \tau_\q)$ are aberrant.
\Cref{table:KL}, which also includes scipy's estimate of $\PP(T_\q < \tau_\q)$, summarizes the situation.

\begin{table}[H]
  \begin{centering}
    \begin{tabular}{c|c|l|l c c|c|l|l}
      \multicolumn{2}{c}{} & \multicolumn{2}{c}{$\PP(T_\q < \tau_\q)$} & \multicolumn{3}{c}{} & \multicolumn{2}{c}{$\PP(T_\q < \tau_\q)$} \\
      $\q$ & \multicolumn{1}{c|}{$\tau_\q$} & \multicolumn{1}{c}{\Cref{thm:left_tail}}
      & \multicolumn{1}{c}{scipy}
      & \hspace{1cm}
      & $\q$ & $\tau_\q$ & \multicolumn{1}{c}{\Cref{thm:left_tail}}
      & \multicolumn{1}{c}{scipy \rule{0pt}{1em}}
      \\
      \cline{1-4} \cline{6-9}
  $3$  &  $2.66\cdot 10^{-9}$  &  $\leqslant 13.99\%$  &  $13.89\%$                         &  &   $22$  &  $7.08\cdot 10^{-8}$  &  $\leqslant 2.76\cdot 10^{-1}\%$  &  $2.42\cdot 10^{-2}\%$  \rule{0pt}{1em} \\
  $4$  &  $3.00\cdot 10^{-9}$  &  $\leqslant 14.86\%$  &  $14.74\%$                         &  &   $23$  &  $1.12\cdot 10^{-7}$  &  $\leqslant 2.06\cdot 10^{-5}\%$  &  $4.54\cdot 10^{-8}\%$  \\
  $5$  &  $3.95\cdot 10^{-9}$  &  $\leqslant 4.18\cdot 10^{-1}\%$  &  $2.55\cdot 10^{-1}\%$  &  &   $24$  &  $2.39\cdot 10^{-8}$  &  $\leqslant 3.90\cdot 10^{-2}\%$  &  $7.45\cdot 10^{-3}\%$  \\
  $6$  &  $2.64\cdot 10^{-9}$  &  $\leqslant 13.96\%$  &  $13.86\%$                         &  &   $25$  &  $1.00\cdot 10^{-7}$  &  $\leqslant 6.54\cdot 10^{-5}\%$  &  $2.78\cdot 10^{-7}\%$  \\
  $7$  &  $2.41\cdot 10^{-8}$  &  $\leqslant 5.87\cdot 10^{-1}\%$  &  $1.96\cdot 10^{-1}\%$  &  &   $26$  &  $5.17\cdot 10^{-8}$  &  $\leqslant 6.20\cdot 10^{-3}\%$  &  $3.47\cdot 10^{-4}\%$  \\
  $8$  &  $1.32\cdot 10^{-8}$  &  $\leqslant 2.52\%$  &  $1.50\%$                           &  &   $27$  &  $9.58\cdot 10^{-8}$  &  $\leqslant 4.18\cdot 10^{-4}\%$  &  $3.28\cdot 10^{-6}\%$  \\
  $9$  &  $1.02\cdot 10^{-8}$  &  $\leqslant 6.87\cdot 10^{-2}\%$  &  $2.41\cdot 10^{-2}\%$  &  &   $28$  &  $7.28\cdot 10^{-8}$  &  $\leqslant 4.10\cdot 10^{-2}\%$  &  $2.05\cdot 10^{-3}\%$  \\
 $10$  &  $3.93\cdot 10^{-9}$  &  $\leqslant 4.11\cdot 10^{-1}\%$  &  $2.53\cdot 10^{-1}\%$  &  &   $29$  &  $1.90\cdot 10^{-7}$  &  $\leqslant 2.79\cdot 10^{-5}\%$  &  $5.48\cdot 10^{-9}\%$  \\
 $11$  &  $7.09\cdot 10^{-8}$  &  $\leqslant 2.77\cdot 10^{-1}\%$  &  $2.43\cdot 10^{-2}\%$  &  &   $30$  &  $1.53\cdot 10^{-8}$  &  $\leqslant 8.36\cdot 10^{-3}\%$  &  $1.64\cdot 10^{-3}\%$  \\
 $12$  &  $9.47\cdot 10^{-9}$  &  $\leqslant 1.53\%$  &  $9.28\cdot 10^{-1}\%$              &  &   $31$  &  $1.90\cdot 10^{-7}$  &  $\leqslant 3.96\cdot 10^{-6}\%$  &  $3.86\cdot 10^{-10}\%$  \\
 $13$  &  $5.18\cdot 10^{-8}$  &  $\leqslant 6.23\cdot 10^{-3}\%$  &  $3.48\cdot 10^{-4}\%$  &  &   $32$  &  $8.54\cdot 10^{-8}$  &  $\leqslant 1.50\cdot 10^{-3}\%$  &  $2.26\cdot 10^{-5}\%$  \\
 $14$  &  $2.42\cdot 10^{-8}$  &  $\leqslant 5.90\cdot 10^{-1}\%$  &  $1.97\cdot 10^{-1}\%$  &  &   $33$  &  $9.37\cdot 10^{-8}$  &  $\leqslant 3.49\cdot 10^{-5}\%$  &  $1.55\cdot 10^{-7}\%$  \\
 $15$  &  $1.53\cdot 10^{-8}$  &  $\leqslant 8.31\cdot 10^{-3}\%$  &  $1.65\cdot 10^{-3}\%$  &  &   $34$  &  $7.71\cdot 10^{-8}$  &  $\leqslant 7.02\cdot 10^{-4}\%$  &  $1.10\cdot 10^{-5}\%$  \\
 $16$  &  $3.79\cdot 10^{-8}$  &  $\leqslant 1.94\cdot 10^{-1}\%$  &  $3.52\cdot 10^{-2}\%$  &  &   $35$  &  $1.03\cdot 10^{-7}$  &  $\leqslant 8.05\cdot 10^{-7}\%$  &  $1.01\cdot 10^{-9}\%$  \\
 $17$  &  $7.72\cdot 10^{-8}$  &  $\leqslant 6.98\cdot 10^{-4}\%$  &  $1.10\cdot 10^{-5}\%$  &  &   $36$  &  $3.57\cdot 10^{-8}$  &  $\leqslant 8.01\cdot 10^{-4}\%$  &  $4.84\cdot 10^{-5}\%$  \\
 $18$  &  $1.02\cdot 10^{-8}$  &  $\leqslant 6.83\cdot 10^{-2}\%$  &  $2.40\cdot 10^{-2}\%$  &  &   $37$  &  $2.60\cdot 10^{-7}$  &  $\leqslant 3.27\cdot 10^{-6}\%$  &  $1.92\cdot 10^{-11}\%$  \\
 $19$  &  $1.01\cdot 10^{-7}$  &  $\leqslant 6.52\cdot 10^{-4}\%$  &  $4.97\cdot 10^{-6}\%$  &  &   $38$  &  $1.01\cdot 10^{-7}$  &  $\leqslant 6.48\cdot 10^{-4}\%$  &  $4.96\cdot 10^{-6}\%$  \\
 $20$  &  $1.13\cdot 10^{-8}$  &  $\leqslant 2.84\cdot 10^{-3}\%$  &  $5.68\cdot 10^{-4}\%$  &  &   $39$  &  $1.16\cdot 10^{-7}$  &  $\leqslant 3.52\cdot 10^{-6}\%$  &  $4.01\cdot 10^{-9}\%$  \\
 $21$  &  $4.25\cdot 10^{-8}$  &  $\leqslant 2.09\cdot 10^{-3}\%$  &  $1.23\cdot 10^{-4}\%$  &  &   $40$  &  $5.01\cdot 10^{-8}$  &  $\leqslant 2.71\cdot 10^{-5}\%$  &  $4.94\cdot 10^{-7}\%$  
    \end{tabular}
    \begin{tablecap}
      \label{table:KL}
      For $x = 10^8$ and $\tau_\q, T_\q$ as in \cref{statistic:KL}. Code is available at \cite{reallysmallgit}.
    \end{tablecap}
  \end{centering}
\end{table}


It's clear from \cref{table:KL} that the sequence \eqref{eq:arithmetic_seq} of primes mod $\q$ does not behave like a typical realization of the random model \eqref{eq:random_seq}. Specifically, the relative entropies are much smaller than they would be for iid uniform random variables. This phenomenon is distinct from Chebyshev's bias discussed in \cref{sec:chebyshev}. In light of \cref{rem:wilks}, biases away from uniform in the distribution of primes mod $\q$ lead to larger relative entropies. The fact that, despite Chebyshev's bias, the relative entropies are still too small can be understood as saying that whatever is causing this behaviour is an effect which is more impactful than Chebyshev's bias, at least in the range of $x,\q$ tabulated here.

\Cref{rem:wilks} also explains that atypically small relative entropies are impossible for iid random variables, i.e.\ any model of the primes which elucidates the results of \cref{table:KL} must capture that their residues mod $\q$ are not independent of one another.

\section{The theoretical state of affairs}

Theoretical results can lend credence to the notion that \eqref{eq:arithmetic_seq} and \eqref{eq:random_seq} ought to be statistically similar.
For instance,
the \textit{strong law of large numbers} \cite[Thm.\ 2.4.1]{durrett}
\begin{align}
  \label{eq:lln}
  &\frac{\#\{p \leqslant x \,:\, \xi_{p,\q} = a\}}{\pi(x)} \stackrel{\mathrm{a.s.}}{\longrightarrow} \frac{1}{\varphi(\q)}
\end{align}
is mirrored
in the arithmetic by 
\textit{Dirichlet's theorem for primes in arithmetic progressions} \cite[Cor.\ 11.19]{MV}
\begin{align}
  \label{eq:dirichlet_ap}
  &\frac{\pi(x;\,a\mod\q)}{\pi(x)} \sim \frac{1}{\varphi(\q)}
  .
\end{align}
Furthering the analogy, the exponents in the rates of convergence in \eqref{eq:lln} and \eqref{eq:dirichlet_ap} match:
the consequence of Berry--Esseen's effective \textit{central limit theorem} \cite[Thms.\ 3.4.1 and 3.4.17]{durrett}
\begin{align*}
  &\left|\frac{\#\{p \leqslant x \,:\, \xi_{p,\q} = a\}}{\pi(x)} - \frac{1}{\varphi(\q)}\right| \ll \pi(x)^{-\half + \eps},
  \\
  &\left|\frac{\#\{p \leqslant x \,:\, \xi_{p,\q} = a\}}{\pi(x)} - \frac{1}{\varphi(\q)}\right| \gg \pi(x)^{-\half - \eps} \;\text{ infinitely often}
\end{align*}
is, on the arithmetic side, the consequence of the \textit{Riemann Hypothesis for Dirichlet $L$-functions} \cite[Cor.\ 13.8, \S 15]{MV}
\begin{align*}
  &\left|\frac{\pi(x;\,a \mod\q)}{\pi(x)} - \frac{1}{\varphi(\q)}\right| \ll \pi(x)^{-\half + \eps},
  \\
  &\left|\frac{\pi(x;\,a \mod\q)}{\pi(x)} - \frac{1}{\varphi(\q)}\right| \gg \pi(x)^{-\half - \eps} \;\text{ infinitely often}
  .
\end{align*}

Models like \eqref{eq:random_seq} naturally lead one to conjectures: absent evidence to the contrary, one may guess that statements about the random model true
$100\%$ of the time should be true for primes mod $\q$ as well.
For example, Hooley \cite{hooley} conjectures that as $x \geqslant \q \to \infty$ jointly in any manner,
\begin{align}
  \label{eq:hooley}
  \mathrm{Var}\Big[\pi(x;\,\cdot\mod\q)\Big] \sim x\log\q
  .
\end{align}
Various results support
this conjecture in the regime $\q \gg (\log\log x)^{1+\eps}$ \cite{fiorilli:primesap},  
and recently the conjecture was shown to be false in the complementary regime $\q \ll \log\log x$ \cite{fiorilli_martin}.
It's likely no coincidence that this precisely mirrors the behaviour of the random model \eqref{eq:random_seq} \cite[\S 8.5]{durrett}, making this outcome predictable. 


The results of \cref{table:KL} are not similarly predictable: Hooley's conjecture \eqref{eq:hooley} predicts variance among the values of $\pi(x;\,a\mod\q)$
that is too small for the iid random variables \eqref{eq:random_seq},
and was proven to be false by establishing a lower bound on said variance. In contrast, the statistics tabulated above, rooted in empirical fact, show variance among those same values $\pi(x;\,a\mod\q)$ implausibly small under the random model \eqref{eq:random_seq}! It appears that, while \eqref{eq:hooley} is false, the general notion is correct: primes mod $\q$ vary less than iid random variables would.

Though much work has been done on lower bounds --- see \cite{dlb_f} for a survey --- upper bounds elucidating \cref{table:KL}'s non-random behaviour are lacking. As noted, these would necessarily demonstrate interdependence between primes mod $\q$.



\appendix

\section{Proof of \cref*{thm:left_tail}}
\label{sec:proof}

Our proof of \cref{thm:left_tail} proceeds via the ``method of types'' \cite[\S 11]{cover_thomas}. Given a finite set $S = \{a_1,\dots,a_k\}$ of cardinality $\#S = k$ and a finite sequence $x = (x_1, x_2, \dots, x_n) \in S^n$ of length $n$ whose entries are elements of $S$, 
let $\fp_x$ denote the empirical distribution on $S$ yielded by $x$, i.e.
\begin{align*}
  \fp_x \coloneqq \left(\frac{\#\{\ell \,:\, x_\ell = a_1\}}{n}, \dots, \frac{\#\{\ell \,:\, x_\ell = a_k\}}{n}\right)\!.
\end{align*}
In the opposite direction, given a probability distribution $\fp$ on $S$ whose entries are rational numbers with denominator $n$, the set $T = T_\fp$ of all sequences $x$ of length $n$ for which $\fp_x = \fp$ is called the \textit{type class} of $\fp$. Write $T(a) \coloneqq n\fp(a)$ for $a \in S$.

Certain statistics of sequences $x \in S^n$, including the relative entropy $\dkl(x\parallel \fq)$ for fixed $\fq$, depend only on the type class of $\fp_x$. Ergo the study of the distribution of these statistics for $x \sim \mathrm{Unif}(S^n)$ can be reduced to combinatorics involving type classes.

The first step in proving \cref{thm:left_tail} invokes Pinsker's inequality
\begin{align}
  \label{eq:pinsker}
  \half \left(\sum_{a \in S} |\fp(a) - \fq(a)|\right)^{\!2} \leqslant \dkl(\fp\parallel\fq)
  .
\end{align}
With \eqref{eq:pinsker}, the question is transformed into one about $L^1$ norms of type classes, which is more amenable to combinatorial analysis.

The necessary bound on cardinalities of type classes is given in \cref{lemma:cTDelta}, while the weights which appear, multinomial coefficients, are estimated using an effective form of Stirling's formula and a polynomial bound on $\log(1 + x)$ coming from a series expansion.

\begin{proof}[Proof of \cref{thm:left_tail}]

%
Pinsker's inequality \eqref{eq:pinsker} implies that, for $x \sim \mathrm{Unif}(S^n)$,
\begin{align}
  \label{eq:pinsker_consequence}
  \PP(\dkl(\fp_x \parallel \mathrm{Unif}(S)) \leqslant \theta)
  \,\leqslant\,
  \PP\!\left(\sum_{a \in S} \left|\fp_x(a) - \tfrac{1}{k}\mspace{-1mu}\right| \leqslant \sqrt{2\theta} \right)\!
  .
\end{align}

Moving to type classes, let
\begin{align*}
  \cT
  &\coloneqq \left\{\text{type classes $T$} \,:\, T = T_{\fp_x} \text{ for some $x \in S^n$}\right\}
  \\
  &= \left\{ T = (T(a_1),\dots,T(a_k)) \,:\, T(a) \in \Z_{\geqslant 0},\, \sum_{a \in S} T(a) = n \right\}\!
  .
\end{align*}
The probability distribution on $\cT$ induced by $x \sim \mathrm{Unif}(S^n)$ is the uniform multinomial
\begin{align}
  \label{eq:type_pmf}
  \PP\!\left(T_{\fp_x} = T \mid x \sim \mathrm{Unif}(S^n)\right)
  =
  \frac{1}{k^n}
  \binom{n}{T(a_1), \dots, T(a_k)}
  =
  \frac{1}{k^n}
  \frac{n!}{T(a_1)!\cdots T(a_k)!}
  .
\end{align}
Let $\PP_\cT$ denote probabilities under the distribution \eqref{eq:type_pmf},
and set $\mu \coloneqq n/k$. Combining \eqref{eq:type_pmf} with \eqref{eq:pinsker_consequence},
\begin{align}
  \label{eq:dkl_bound1}
  \PP(\dkl(\fp_x \parallel \mathrm{Unif}(S)) \leqslant \theta)
  \,\leqslant\,
  \PP_\cT\!\left(\sum_{a \in S} |T(a) - \mu| \leqslant n\sqrt{2\theta} \right)\!
  .
\end{align}

The bound
\begin{align}
  \nonumber
  &\sum_{a \in S} |T(a) - \mu| \leqslant n\sqrt{2\theta}
  \\
  \nonumber
  \Longrightarrow\quad
  &\sum_{a \in S} |T(a) - [\mu]| \leqslant n\sqrt{2\theta} + k|\mu - [\mu]|
\end{align}
applied to the right hand side of \eqref{eq:dkl_bound1} gives
\begin{align}
  \label{eq:count_bound}
  \PP_\cT\!\left(\sum_{a \in S} |T(a) - \mu| \leqslant n\sqrt{2\theta} \right)
  \,\leqslant\,
  \PP_\cT\!\left(\sum_{a \in S} |T(a) - [\mu]| \leqslant n\sqrt{2\theta} + k|\mu - [\mu]| \right)\!
  .
\end{align}
Set $B = \lfloor n\sqrt{2\theta} + k|\mu - [\mu]| \rfloor$.
Via \eqref{eq:count_bound}, our objective is reduced to enumerating type classes with $L^1$ norm at most $B$, weighted by the right hand side of \eqref{eq:type_pmf}.

  Define
  \begin{align}
    \nonumber
    d_j &\coloneqq T(a_j) - [\mu]
    \\
    \nonumber
    \cT_\Delta &\coloneqq \left\{T \,:\, \sum_{j=1}^k |d_j| = \Delta \right\}
    \\
    \label{eq:cTB_def}
    \cT_{\leqslant B}
    &\coloneqq \bigcup_{\Delta = 0}^B \cT_\Delta
    = \left\{ T \,:\, \sum_{a \in S} |T(a) - [\mu]| \leqslant B \right\}\!
    .
  \end{align}

  Combining \eqref{eq:count_bound}, \eqref{eq:dkl_bound1}, and \eqref{eq:type_pmf}, the probability on the left hand side of \eqref{eq:dkl_bound1} which we're ultimately looking to bound, is in turn bounded via
  \begin{align}
    \label{eq:dkl_bound2}
    k^n \PP(\dkl(\fp_x \parallel \mathrm{Unif}(S)) \leqslant \theta)
    \,\leqslant\,
    \sum_{T \in \cT_{\leqslant B}} \frac{n!}{T(a_1)! \cdots T(a_k)!}
    .
  \end{align}
  The bound we obtain for \eqref{eq:dkl_bound2}'s right hand side, divided by $k^n$, will be \cref{thm:left_tail}'s right hand side.

  Let's separate the set of type classes $\cT_{\leqslant B}$ into the union \eqref{eq:cTB_def}, each piece to be bounded separately.
  \begin{align}
    \nonumber
    \sum_{T \in \cT_{\leqslant B}} \frac{n!}{T(a_1)! \cdots T(a_k)!}
    ={}&
    \sum_{T \in \cT_{\leqslant B}} \frac{n!}{\prod_{j=1}^k (d_j + [\mu])!}
    \\
    \nonumber
    ={}&
    \sum_{\Delta=0}^{B} \sum_{T \in \cT_\Delta} \frac{n!}{\prod_{j=1}^k (d_j + [\mu])!}
    \\
    ={}&
    \label{eq:dkl_bound3}
    \sum_{\Delta=1}^{B} \sum_{T \in \cT_\Delta} \frac{n!}{\prod_{j=1}^k (d_j + [\mu])!}
    + 
    \begin{cases}
      {\ds \frac{n!}{(\mu!)^k}} & \text{if $\mu \in \Z$}
      \\
      0 & \text{otherwise.}
    \end{cases}
  \end{align}
  Applying the effective version of Stirling's formula
  \begin{align*}
    \sqrt{2\pi n}\mspace{2mu}\frac{n^n}{e^n} < n! < \sqrt{2\pi n}\mspace{2mu}\frac{n^n}{e^n} e^{\frac{1}{12n}}
    ,
  \end{align*}
  valid for all $n \geqslant 1$,
  to each summand of \eqref{eq:dkl_bound3} yields
  \begin{align}
    \label{eq:dkl_bound4}
    \begin{aligned}
    \sum_{\Delta=1}^{B} \sum_{T \in \cT_\Delta} \frac{n!}{\prod_{j=1}^k (d_j + [\mu])!}
    &<
    \sum_{\Delta=1}^{B} \sum_{T \in \cT_\Delta}
    \exp\!\Bigg[
      n(\log n - 1) + \log \sqrt{2\pi} + \frac{1}{12n} + \frac{1}{2} \log n
      \\
      &\hspace{2cm}
      - \sum_{j=1}^k (d_j + [\mu])\left(\log(d_j + [\mu]) - 1\right) + \log \sqrt{2\pi} + \half\log(d_j + [\mu])\Bigg]
    .
    \end{aligned}
  \end{align}
  Rewriting the relation ${\ds \sum_{a \in S} T(a) = n}$ as
  \begin{align*}
     \exp\!\left(-n + \sum_{j=1}^k (d_j + [\mu])\right) = 1,
  \end{align*}
  the upper bound \eqref{eq:dkl_bound4} becomes
  \begin{align}
    \nonumber
    \sum_{\Delta=1}^{B} \sum_{T \in \cT_\Delta} \frac{n!}{\prod_{j=1}^k (d_j + [\mu])!}
    &<
    \sum_{\Delta=1}^{B} \sum_{T \in \cT_\Delta} (2\pi)^{\frac{k-1}{2}} e^{\frac{1}{12n}}
    \exp\!\Bigg[(n + \thalf) \log n - \sum_{j=1}^k (d_j + [\mu] + \thalf) \log(d_j + [\mu])\Bigg]
    \\
    \label{eq:dkl_bound5}
    &<
    \frac{\sqrt{2\pi n} \,n^n e^{\frac{1}{12n}}}{(2\pi)^{\frac{k}{2}} } \sum_{\Delta = 1}^B \sum_{T \in \cT_\Delta} 
    \exp\!\Bigg[ -\sum_{j=1}^k \left( d_j + [\mu] + \thalf \right) \log(d_j + [\mu]) \Bigg]
    .
  \end{align}


  Let's focus on the term $-\sum_{j=1}^k ([\mu] + \thalf)\log(d_j + [\mu])$ in the exponent of \eqref{eq:dkl_bound5}.
  For any $0 < R < 1$, define
  \begin{align*}
    c \coloneqq (R + \thalf R^2 + \log(1 - R))R^{-2}
    .
  \end{align*}
  For all $|x| < R$, 
  \begin{align*}
    \log(1 + x) \geqslant x - \big(\thalf - c\big) x^2
    .
  \end{align*}
  In the special case $R = \Delta/[\mu]$, which is less than $1$ by assumption, the value of $c$ is
  \begin{align*}
    c_\Delta \coloneqq \frac{1}{2} + \frac{[\mu]}{\Delta} + \frac{[\mu]^2}{\Delta^2} \log\!\left(1 - \frac{\Delta}{[\mu]} \right)\!
    .
  \end{align*}
  As $|d_j| \leqslant \Delta$ by definition of $\cT_\Delta$,
  \begin{align*}
    \sum_{j=1}^k \log(d_j + [\mu])
    &=
    k\log[\mu] + \sum_{j=1}^k \log\!\left(1 + \frac{d_j}{[\mu]}\right)
    \\
    &\geqslant
    k\log[\mu] + \sum_{j=1}^k \frac{d_j}{[\mu]} - \frac{\half - c_\Delta}{[\mu]^2}d_j^2
    \\
    &\geqslant
    k\log[\mu] + \frac{n - k[\mu]}{[\mu]} - \frac{\half - c_\Delta}{[\mu]^2} \sum_{j=1}^k d_j^2
    .
  \end{align*}
  Substituting into \eqref{eq:dkl_bound5},
  \begin{align}
    \nonumber
    \sum_{\Delta=1}^{B} \sum_{T \in \cT_\Delta} \frac{n!}{\prod_{j=1}^k (d_j + [\mu])!}
    &<
    \frac{\sqrt{2\pi n} \,n^n e^{\frac{1}{12n}}}{(2\pi)^{\frac{k}{2}} }
    \cdot
    \exp\!\left(-k([\mu] + \thalf)\left(\log[\mu] + \frac{\mu - [\mu]}{[\mu]}\right)\right)
    \\
    \nonumber
    &\quad\,\, \cdot
    \sum_{\Delta = 1}^B \sum_{T \in \cT_\Delta} 
    \exp\!\Bigg[
      - \sum_{j=1}^k d_j\log(d_j + [\mu])
      + (\thalf - c_\Delta)\left(\frac{1}{[\mu]} + \frac{1}{2[\mu]^2} \right) \sum_{j=1}^k d_j^2
      \Bigg]
    \\
    \label{eq:dkl_bound8}
    &\;
    \begin{aligned}
    <{}&
    \frac{\sqrt{2\pi n} \,n^n e^{\frac{1}{12n}}}{(2\pi[\mu])^{\frac{k}{2}} [\mu]^{k[\mu]}}
    \cdot
    \exp\!\left(k\left(1 + \frac{1}{2[\mu]}\right)([\mu] - \mu)\right)
    \\
    &\cdot
    \sum_{\Delta = 1}^B \sum_{T \in \cT_\Delta} 
    \exp\!\Bigg[
      - \sum_{j=1}^k d_j\log(d_j + [\mu])
      + (\thalf - c_\Delta)\left(\frac{1}{[\mu]} + \frac{1}{2[\mu]^2} \right) \sum_{j=1}^k d_j^2
      \Bigg]    
    .
    \end{aligned}
  \end{align}

  Looking at the exponent of \eqref{eq:dkl_bound8},
  \begin{align}
    \nonumber
    - \sum_{j=1}^k
    &d_j\log(d_j + [\mu])
    + (\thalf - c_\Delta)\left(\frac{1}{[\mu]} + \frac{1}{2[\mu]^2} \right) \sum_{j=1}^k d_j^2
    \\
    \nonumber
    &=
    - \sum_{j=1}^k d_j \left(\log[\mu] + \log\!\left(1 + \frac{d_j}{[\mu]}\right)\right)
    + (\thalf - c_\Delta)\left(\frac{1}{[\mu]} + \frac{1}{2[\mu]^2} \right) \sum_{j=1}^k d_j^2
    \\
    \nonumber
    &\leqslant
    - (n - k[\mu])\log[\mu]
    - \sum_{j=1}^k d_j \left(\frac{d_j}{[\mu]} - \frac{\half - c_\Delta}{[\mu]^2}d_j^2\right)
    + (\thalf - c_\Delta)\left(\frac{1}{[\mu]} + \frac{1}{2[\mu]^2} \right) \sum_{j=1}^k d_j^2
    \\
    \label{eq:dkl_bound6}
    &\leqslant
    - k(\mu - [\mu])\log[\mu]
    + \frac{\half - c_\Delta}{[\mu]^2}\sum_{j=1}^k d_j^3
    + \left(\frac{-\thalf - c_\Delta}{[\mu]} + \frac{\thalf - c_\Delta}{2[\mu]^2} \right) \sum_{j=1}^k d_j^2
    .
  \end{align}
  
  The assumption
  \begin{align*}
    c_{B} > -\half\frac{1 - \frac{1}{2[\mu]}}{1 + \frac{1}{2[\mu]}}
  \end{align*}
  implies
  $${\ds \frac{\thalf - c_\Delta}{2[\mu]^2} - \frac{\thalf + c_\Delta}{[\mu]} < 0.}$$
  In conjunction with the facts
  \begin{align*}
    \sum_{j=1}^k d_j^3  \leqslant \Delta^3 \quad\text{and}\quad  \sum_{j=1}^k d_j^2 \geqslant \Delta
    ,
  \end{align*}
  the bound
  \eqref{eq:dkl_bound6}
  can be loosened to
  \begin{align}
    \label{eq:dkl_bound7}
    \begin{aligned}
    - \sum_{j=1}^k
    &d_j\log(d_j + [\mu])
    + (\thalf - c_\Delta)\left(\frac{1}{[\mu]} + \frac{1}{2[\mu]^2} \right) \sum_{j=1}^k d_j^2
    \\
    &\leqslant
    - k(\mu - [\mu])\log[\mu]
    + \frac{\half - c_\Delta}{[\mu]^2}\Delta^3
    - \left(\frac{\thalf + c_\Delta}{[\mu]} - \frac{\thalf - c_\Delta}{2[\mu]^2} \right) \Delta
    .
    \end{aligned}
  \end{align}

  The following combinatorial lemma lets us handle the sum over $T \in \cT_\Delta$ featuring in \eqref{eq:dkl_bound8}.
  \begin{lemma}
    \label{lemma:cTDelta}
    For $\Delta \geqslant 1$,
  \begin{align*}
    \#\cT_\Delta \leqslant \sum_{r=1}^{k} \binom{k}{r} \binom{\Delta - 1}{r - 1} 2^r
    .
  \end{align*}
  \end{lemma}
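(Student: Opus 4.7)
The plan is to bound $\#\cT_\Delta$ by counting $k$-tuples of integers $(d_1,\ldots,d_k)$ satisfying $\sum_{j=1}^k |d_j| = \Delta$, since each $T \in \cT_\Delta$ is determined by its shifts $d_j = T(a_j) - [\mu]$. Note that the type classes also satisfy the constraint $\sum_{j=1}^k d_j = n - k[\mu]$ and the positivity $d_j \geqslant -[\mu]$; both will simply be dropped in passing from $\cT_\Delta$ to the larger set of all integer tuples with $L^1$ norm equal to $\Delta$, which only loosens the bound.

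To enumerate these tuples, I would stratify by $r \coloneqq \#\{j \,:\, d_j \neq 0\}$, which ranges over $1 \leqslant r \leqslant \min(k,\Delta)$ (and the bound in the statement trivially extends the range up to $k$ since $\binom{\Delta-1}{r-1} = 0$ for $r > \Delta$). For a fixed value of $r$, the count factors as a product of three independent combinatorial choices:
\begin{itemize}
\item choosing the support $\{j \,:\, d_j \neq 0\} \subseteq \{1,\ldots,k\}$, which contributes $\binom{k}{r}$;
\item choosing the sign $\mathrm{sgn}(d_j) \in \{\pm 1\}$ for each $j$ in the support, contributing $2^r$;
\item choosing the absolute values $|d_j| \geqslant 1$ on the support so that they sum to $\Delta$, which is a composition of $\Delta$ into $r$ positive parts and contributes $\binom{\Delta-1}{r-1}$ (the standard stars-and-bars count).
\end{itemize}
Multiplying and summing over $r$ yields the claimed bound $\sum_{r=1}^k \binom{k}{r} \binom{\Delta-1}{r-1} 2^r$.

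There is essentially no obstacle here: the lemma is a straightforward counting identity once the parametrization $T \leftrightarrow (d_j)$ is in place, and the only subtle point is being comfortable discarding the linear constraint $\sum d_j = n - k[\mu]$ (justified by the fact we only require an upper bound). The positivity constraint $d_j \geqslant -[\mu]$ is automatic within the regime of the theorem, since $|d_j| \leqslant \Delta \leqslant B < [\mu]$ by hypothesis, so no additional work is required to enforce it.
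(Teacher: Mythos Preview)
Your proposal is correct and follows essentially the same approach as the paper: stratify by the support size $r=\#\{j:d_j\neq 0\}$, then multiply the number of supports $\binom{k}{r}$, the number of sign patterns $2^r$, and the number of compositions $\binom{\Delta-1}{r-1}$. Your treatment is in fact slightly more explicit than the paper's, since you spell out that the inequality arises from dropping the linear constraint $\sum_j d_j = n-k[\mu]$ and you note why the positivity constraint is automatic in the relevant regime.
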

  \begin{proof}
    Suppose there are exactly $r$ values of $j$ for which $d_j \neq 0$. As $\Delta \geqslant 1$, we have $1 \leqslant r \leqslant k$. Each case is mutually exclusive, so we count them separately and sum. The number of ways to distribute $m$ balls among $\ell$ bins is
      $\binom{m+\ell-1}{\ell - 1}\!\!$
      ,
    from which it can be deduced that the number of ways to write $\Delta$ as a sum of $r$ strictly positive integers is
      $\binom{\Delta - 1}{r - 1}\!\!$
      .
    Multiplying by $\binom{k}{r}$ to reflect the possible choices of precisely which $r$ values of $j$ have $d_j \neq 0$, multiplying by $2^r$ to reflect the possible sign choices, and summing over $1 \leqslant r \leqslant k$ yields \cref{lemma:cTDelta}.
  \end{proof}
  
  Applying \eqref{eq:dkl_bound7} and \cref{lemma:cTDelta} to \eqref{eq:dkl_bound8}, and then substituting into \eqref{eq:dkl_bound3} gives
  \begin{align*}
    \sum_{T \in \cT_{\leqslant B}}
    \frac{n!}{T(a_1)! \cdots T(a_k)!}
    <{}&
    \frac{\sqrt{2\pi n} \,n^n e^{\frac{1}{12n}}}{(2\pi[\mu])^{\frac{k}{2}} [\mu]^{n}}
    \cdot
    \exp\!\left(k\left(1 + \frac{1}{2[\mu]}\right)([\mu] - \mu)\right)
    \\
    &\cdot
    \sum_{\Delta = 0}^B
    \exp\!\Bigg[
    \frac{\half - c_\Delta}{[\mu]^2}\Delta^3
    - \left(\frac{\thalf + c_\Delta}{[\mu]} - \frac{\thalf - c_\Delta}{2[\mu]^2}\right)\Delta
    \Bigg]
    \sum_{r=1}^{k} \binom{k}{r} \binom{\Delta - 1}{r - 1} 2^r
    \\
    &+ 
    \begin{cases}
      {\ds \frac{n!}{(\mu!)^k}} & \text{if $\mu \in \Z$}
      \\
      0 & \text{otherwise.}
    \end{cases}
  \end{align*}
  Dividing by the total number of sequences $k^n$ to match \eqref{eq:dkl_bound2} yields \cref{thm:left_tail}.
\end{proof}

\section*{Acknowledgements}
We thank Noam Elkies, Kimball Martin, Michael Rubinstein, Kannan Soundararajan, Drew Sutherland, and Jerry Wang for helpful discussions.


\renewcommand{\bibliofont}{\normalfont\small} 
\bibliographystyle{amsalpha}
\bibliography{datapaperbib}{}

\end{document}